\theoremstyle{plain}
\newtheorem*{theorem}{Theorem}
\newtheorem{lemma}{Lemma}
\newtheorem*{corollary}{Corollary}
\theoremstyle{definition}
\newtheorem*{example}{Example}
\begin{document}

\begin{center}
\textsc{\textbf{On grouops with formational subnormal or self-normalizing subgroups}}
\end{center}

\medskip

\begin{center}
I.\,L.~Sokhor
\end{center}

\begin{abstract}
We establish the structure of finite groups with $\mathfrak{F}$-subnormal or self-normalizing primary cyclic subgroups in case $\mathfrak{F}$ is a subgroup-closed saturated superradical formation containing all nilpotent groups.
\end{abstract}

\textbf{Keywords:}
finite group, primary cyclic subgroup, derived subgroup,
residual, subnormal subgroup, abnormal subgroup.

\section{Introduction}

All groups in this paper are finite. We use the standard notation and terminology
of~\cite{Hup,DH,Mon}.

Let $\mathfrak F$ be a formation, and let $G$ be a group.
A subgroup~$H$ is called $\mathfrak F$-subnormal
if either $G=H$ or there is a chain of subgroups
\[H=H_0< \cdot \  H_1< \cdot \  \ldots < \cdot \  H_n=G   \]
such that $H_i/({H_{i-1}})_{H_i}\in \mathfrak F$ for all~$i$,
this is equivalent to $H_i^\mathfrak F\le H_{i-1}$.
Here $A_B=\bigcap _{b\in B}A^b$ is the core of a subgroup $A$ in a group~$B$,
$H_{i-1}<\!\cdot \,H_i$ denotes that $H_{i-1}$ is a maximal subgroup of a group~$H_i$.
A subgroup~$H$ of a group~$G$ is said to be $\mathfrak F$-abnormal in $G$ if
$L/K_L\not\in\mathfrak{F}$ for all subgroups $K$ and $L$
such that $H\le K<\!\cdot\,L\le G$.
It is clear that any proper subgroup of a group can not be both
$\mathfrak{F}$-subnormal and $\mathfrak{F}$-abnormal, i.\,e.
these notions are alternative. Besides,
if  $\mathfrak{X}\subseteq\mathfrak{F}$, then
every $\mathfrak{X}$-subnormal subgroup is $\mathfrak{F}$-subnormal and
every $\mathfrak{F}$-abnormal subgroup is $\mathfrak{X}$-abnormal.

Many authors investigated groups in which all or certain subgroups are
$\mathfrak F$-subnormal or $\mathfrak F$-abnormal,
see references in~\cite{Skiba2016}.

For a subgroup-closed formation $\mathfrak{F}$ containing all nilpotent groups,
every $\mathfrak{F}$-abnormal subgroup is self-normalizing.
Self-normalizingness and $\mathfrak{F}$-subnormality are not alternative notions.
For instance, in a soluble group, every non-normal subgroup of prime index
is both self-normalizing and $\mathfrak{U}$-subnormal.
Here $\mathfrak{U}$ denotes the formation of all supersoluble groups.

\begin{example}
Assume that $\mathfrak{F}=\mathfrak{NA}$ is the formation of all groups
with the nilpotent derived subgroups. The class of groups with
$\mathfrak{F}$-subnormal or
$\mathfrak{F}$-abnormal primary subgroups
was investigated in~\cite{Sem11}.
If we replace $\mathfrak{F}$-abnormality by
self-normalizingness, then the class under study broadens.

By $E_{p^n}$ we denote an elementary abelian group of order $p^n$
for a prime $p$ and a positive integer $n$,
$Z_m$ denotes a cyclic group of order $m$ for a positive integer $m$.

In GAP's SmallGroup library~\cite{GAP}, there is the group
\[G=(S_3\times S_3\times A_4)\rtimes Z_2\quad (\verb"GAP"~SmallGroup~ID~[864,4670]).\]
In $G$, the Sylow $3$-subgroup ${G_3 \simeq E_{3^3}}$ is $\mathfrak{F}$-subnormal,
the Sylow $2$-subgroup $G_2 \simeq E_{2^4}\rtimes Z_2$ is self-normalizing,
non-$\mathfrak{F}$-subnormal and non-$\mathfrak{F}$-abnormal, and
every proper subgroup of $G_2$ is $\mathfrak{F}$-subnormal.
Besides,
\[G^{\mathfrak{F}}=F(G)\simeq E_{3^2}\times E_{2^2}<
G^{\mathfrak{N}}\simeq E_{3^2}\times A_4<
G^{\prime}\simeq (E_{3^2}\times A_4)\rtimes Z_2.
\]
Thus $G$ belongs to the class of groups with
$\mathfrak{F}$-subnormal or self-normalizing
primary subgroups and does not belong to the class of groups in which
primary subgroups are $\mathfrak{F}$-subnormal or
$\mathfrak{F}$-abnormal.
\end{example}

Groups in which certain subgroups are $\mathfrak F$-subnormal
or self-normalizing  were studied in~\cite{Mon2016}--\cite{MonS2018}.
In particular, in~\cite{MonS2018} the structure of group with
$\mathfrak F$-subnormal or self-normalizing Sylow subgroups
was described for the large class of subgroup-closed formations~$\mathfrak{F}$.

We proceed to develop this line of research  and describe groups with $\mathfrak{F}$-subnormal
or self-normalizing primary cyclic subgroups in case $\mathfrak{F}$ is a subgroup-closed
saturated superradical formation containing all nilpotent groups. We prove

\begin{theorem}\label{th_main}
If $\mathfrak{F}$ is a subgroup-closed saturated superradical formation
containing all nilpotent groups, then for a soluble group $G\notin\mathfrak{F}$,
the following statements are equivalent.

$(1)$~Every primary cyclic subgroup of $G$ is self-normalizing or
$\mathfrak{F}$-subnormal.

$(2)$~Every proper subgroup of $G$ is self-normalizing or $\mathfrak{F}$-subnormal.

$(3)$~$G=G^{\prime}\rtimes \langle x\rangle$, where
$\langle x\rangle$ is a Sylow $p$-subgroup for some $p\in\pi(G)$
and a Carter subgroup, $G^{\prime}\rtimes \langle x^p\rangle\in \mathfrak{F}$.
\end{theorem}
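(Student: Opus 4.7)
The plan is to prove the equivalence via the cycle $(3)\Rightarrow(2)\Rightarrow(1)\Rightarrow(3)$, with $(1)\Rightarrow(3)$ as the substantive direction. For $(3)\Rightarrow(2)$, I would set $M=G'\langle x^p\rangle$; by hypothesis this is a normal subgroup of index $p$ lying in $\mathfrak{F}$. A proper $H\le G$ either sits inside $M$, in which case $H\in\mathfrak{F}$ by subgroup-closure and a maximal chain from $H$ through $M$ to $G$ witnesses $\mathfrak{F}$-subnormality (every intermediate subgroup lies in $\mathfrak{F}$, and $G/M\cong Z_p\in\mathfrak{N}\subseteq\mathfrak{F}$); or $HM=G$, so a Sylow $p$-subgroup of $H$ covers $G/M$, and since the Sylow $p$-subgroup of $G$ is cyclic with unique maximal subgroup $\langle x^p\rangle$, this Sylow of $H$ must be a full conjugate of $\langle x\rangle$, placing a Carter subgroup inside $H$. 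A routine argument then gives $H$ self-normalizing: any $n\in N_G(H)$ conjugates the contained Carter to another Carter of $H$, which by Carter conjugacy in soluble $H$ and Carter self-normalization forces $n\in H$. The implication $(2)\Rightarrow(1)$ is immediate, for $G\notin\mathfrak{F}\supseteq\mathfrak{N}$ is non-nilpotent and hence not primary cyclic, so its primary cyclic subgroups are proper.

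For $(1)\Rightarrow(3)$, the governing observation is that in any $p$-group a proper subgroup is strictly contained in its normalizer; consequently a self-normalizing cyclic $p$-subgroup $\langle y\rangle$ must coincide with the Sylow $p$-subgroup containing it, forcing that Sylow cyclic. I would then invoke the prerequisite theorem (established earlier or drawn from~\cite{Skiba2016,MonS2018}) that under the present hypotheses on $\mathfrak{F}$, a soluble group whose primary cyclic subgroups are all $\mathfrak{F}$-subnormal lies in $\mathfrak{F}$. Since $G\notin\mathfrak{F}$, some primary cyclic subgroup is self-normalizing, yielding a cyclic self-normalizing Sylow $p$-subgroup $\langle x\rangle$, automatically a Carter subgroup of $G$; conjugacy of Carter subgroups rules out any second prime producing such a Sylow.

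Burnside's normal $p$-complement theorem applies (as $\langle x\rangle$ is cyclic and equals $N_G(\langle x\rangle)$), producing a normal $p$-complement $K$ with $G=K\rtimes\langle x\rangle$. Since $\langle x\rangle$ is Carter, its image in $G/G'$ is a Carter subgroup of the abelian group $G/G'$, hence equals $G/G'$, so $G=G'\langle x\rangle$; a coset argument using $\gcd(p,q)=1$ forces every $q$-element with $q\neq p$ into $G'$, giving $K\le G'$ and hence $K=G'$, so $G=G'\rtimes\langle x\rangle$. To verify $M=G'\langle x^p\rangle\in\mathfrak{F}$, note $[G:M]=p$, so any primary cyclic subgroup of $M$ is primary cyclic in $G$ and cannot be self-normalizing in $G$ (such a subgroup would be a Sylow $p$-subgroup of order $p^a$, whereas $|M|_p=p^{a-1}$). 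Hence every primary cyclic subgroup of $M$ is $\mathfrak{F}$-subnormal in $G$, therefore in $M$ by persistence of $\mathfrak{F}$-subnormality to intermediate subgroups for subgroup-closed $\mathfrak{F}$, and a second application of the prerequisite theorem to the soluble group $M$ yields $M\in\mathfrak{F}$.

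The principal obstacle is the prerequisite cited above: that under the hypotheses, $\mathfrak{F}$-subnormality of all primary cyclic subgroups of a soluble group forces membership in $\mathfrak{F}$. This leans on the full strength of the superradical assumption, needed so that $\mathfrak{F}$-subnormal subgroups generate an $\mathfrak{F}$-subgroup, together with saturation and $\mathfrak{N}\subseteq\mathfrak{F}$. A secondary technical point is persistence of $\mathfrak{F}$-subnormality under passage to intermediate subgroups, a standard lemma for subgroup-closed saturated formations that I would cite rather than reprove.
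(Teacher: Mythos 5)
Your proposal is correct and follows essentially the same route as the paper: the ``prerequisite theorem'' you invoke is exactly the paper's Lemma~\ref{lem_inF} (proved there via minimal non-$\mathfrak{F}$-groups and Schmidt groups, not via superradicality as you surmise), after which both arguments obtain a self-normalizing cyclic Sylow $p$-subgroup that is a Carter subgroup, apply Burnside's normal $p$-complement theorem to get $G=G'\rtimes\langle x\rangle$, and show $G'\langle x^p\rangle\in\mathfrak{F}$ because its primary cyclic subgroups cannot be self-normalizing in $G$. Your $(3)\Rightarrow(2)$ case split on $H\le G'\langle x^p\rangle$ versus $HG'\langle x^p\rangle=G$ is equivalent to the paper's split on whether $|\langle x\rangle|$ divides $|H|$, with the same two mechanisms (Lemma~\ref{EF_lem1_1}\,(4),(5) for $\mathfrak{F}$-subnormality, Carter conjugacy for self-normalization).
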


A subnormal subgroup-closed formation  $\mathfrak{F}$ is superradical if
a group $G=AB$, where $A$ and $B$ are $\mathfrak {F}$-subnormal
$\mathfrak {F}$-subgroups of $G$, belongs to $\mathfrak {F}$.
It is well known that a formation with the Shemetkov property~\cite[6.4.6]{BalCl}
and a lattice formation~\cite[Lemma~4]{VasKomS93} are superradical.

\section{Preliminaries}

If $A$ is a subgroup of a group~$B$, then we write $A\leq B$;
if $A$ is a normal  subgroup of a group $B$, then we write $A\lhd B$.
By $\pi(G)$ we denote the set of all primes dividing the order of a group $G$.
A semidirect product of a normal subgroup $A$ and a subgroup $B$ is denoted
by $A\rtimes B$. The symbol $\square$ indicates the end of the proof.

The formations of all abelian and nilpotent subgroups are denoted by
$\mathfrak{A}$ and $\mathfrak{N}$, respectively.

Let $\mathfrak F$ be a formation, and $G$ be a group. The subgroup
\[G^\mathfrak{F}=\bigcap\{N\lhd G : G/N\in\mathfrak{F}\}\]
is called the $\mathfrak{F}$-residual of $G$.

If $\mathfrak X$ and $\mathfrak F$ are subgroup-closed formations, then the product
\[
\mathfrak X\mathfrak F=\{~G\in \mathfrak E \mid G^{\mathfrak F}\in \mathfrak{X}\}
\]
is also a subgroup-closed formation according to \cite[p.~337]{DH} and \cite[p.~191]{Mon}.

We need the following properties of  $\mathfrak{F}$-subnormal and
$\mathfrak{F}$-abnormal subgroups.

\begin{lemma}
\label{EF_lem1_1}
Let $\mathfrak{F}$ be a formation, let $H$ and $K$ be subgroups of~$G$,
and let $N\lhd  G$. The following statements hold.

$(1)$~If $K$ is $\mathfrak{F}$-subnormal in $H$ and $H$ is $\mathfrak{F}$-subnormal in~$G$,
      then $K$ is $\mathfrak{F}$-subnormal in $G$~\textup{\cite[6.1.6\,(1)]{BalCl}}.

$(2)$~If $K/N$ is $\mathfrak{F}$-subnormal in $G/N$, then $K$ is $\mathfrak{F}$-subnormal
      in $G$~\textup{\cite[6.1.6\,(2)]{BalCl}}.

$(3)$~If $H$ is $\mathfrak{F}$-subnormal in~$G$, then $HN/N$ is $\mathfrak{F}$-subnormal
      in $G/N$~\textup{\cite[6.1.6\,(3)]{BalCl}}.

$(4)$~If $\mathfrak{F}$ is a subgroup-closed  formation and $G^{\mathfrak{F}}\leq H$,
      then $H$ is $\mathfrak{F}$-subnormal in $G$~\textup{\cite[6.1.7\,(1)]{BalCl}}.

$(5)$~If $\mathfrak{F}$ is a subgroup-closed  formation, $K\leq H$, $H$ is
      $\mathfrak{F}$-subnormal in~$G$ and $H\in\mathfrak{F}$, then $K$ is
      $\mathfrak{F}$-subnormal in $G$.
\end{lemma}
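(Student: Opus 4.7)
The plan is to derive statement~$(5)$ as a direct consequence of statements~$(1)$ and $(4)$ of the same lemma, which are already available from \cite{BalCl}. The strategy has two short steps: first establish that $K$ is $\mathfrak{F}$-subnormal in $H$, and then transitively promote this to $\mathfrak{F}$-subnormality in $G$.

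For the first step, I would observe that since $H\in\mathfrak{F}$, the $\mathfrak{F}$-residual of $H$ is trivial: indeed, $N=1$ satisfies $H/N\in\mathfrak{F}$, so $H^{\mathfrak{F}}=\bigcap\{N\lhd H:H/N\in\mathfrak{F}\}=1$. Consequently $H^{\mathfrak{F}}=1\le K$, so part~$(4)$ of the lemma (applied to the ambient group $H$ and subgroup $K$, using that $\mathfrak{F}$ is subgroup-closed) yields that $K$ is $\mathfrak{F}$-subnormal in $H$.

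For the second step, the hypothesis gives that $H$ is $\mathfrak{F}$-subnormal in $G$, so part~$(1)$ of the lemma applied to the chain $K\le H\le G$ produces the conclusion that $K$ is $\mathfrak{F}$-subnormal in $G$, as required.

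There is no genuine obstacle here: the proof is a two-line composition of~$(4)$ and~$(1)$, and the only point worth flagging is the trivial but essential observation that membership in $\mathfrak{F}$ forces the $\mathfrak{F}$-residual to vanish, which is what lets part~$(4)$ fire with the containment $H^{\mathfrak{F}}\le K$ automatic.
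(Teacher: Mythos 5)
Your proposal is correct and follows the same route as the paper: show $K$ is $\mathfrak{F}$-subnormal in $H$ and then apply part~$(1)$. The paper deduces the first step directly from subgroup-closure of $\mathfrak{F}$ and $H\in\mathfrak{F}$, while you make the same deduction explicit by noting $H^{\mathfrak{F}}=1\le K$ and invoking part~$(4)$; this is only a cosmetic difference.
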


\begin{proof}
(5)~Since $\mathfrak F$ is a subgroup-closed formation and $H\in\mathfrak{F}$,
    we have $K$ is $\mathfrak{F}$-subnormal in $H$ and $K$ is $\mathfrak{F}$-subnormal
    in $G$ in view of~$(1)$.
\end{proof}

\begin{lemma}[{\cite[Lemma 1.4]{Mon2016}}]\label{lem_Fabn}
Let $\mathfrak F$ be a subgroup-closed formation containing groups of order $p$
for all $p\in \mathbb{P}$, and let $A$ be a $\mathfrak F$-abnormal subgroup of $G$.

$(1)$ If $A\le B\le G$, then $A$ is $\mathfrak F$-abnormal in $B$ and $A=N_G(A)$;

$(2)$ If $A\le B\le G$, then $B$ is $\mathfrak F$-abnormal in $G$ and $B=N_G(B)$.
\end{lemma}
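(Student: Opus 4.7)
The plan is to deduce both claims directly from the definition of $\mathfrak{F}$-abnormality, using only the hypothesis that $\mathfrak{F}$ is subgroup-closed and contains every group of prime order. The whole argument amounts to chasing chains of subgroups and noticing that a normal maximal subgroup produces a quotient of prime order, which lies in $\mathfrak{F}$.

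For part~$(1)$, the $\mathfrak{F}$-abnormality of $A$ in $B$ I would verify by direct inspection: any chain $A\le K<\!\cdot\,L\le B$ is also a chain $A\le K<\!\cdot\,L\le G$, so the defining condition $L/K_L\notin\mathfrak{F}$ transfers from $G$ to $B$. For the equality $A=N_G(A)$ I would argue by contradiction. If $A<N_G(A)$, then since the group is finite I can pick $L$ with $A<\!\cdot\,L\le N_G(A)$. Because $A\triangleleft L$, we have $A_L=A$, and maximality of $A$ in $L$ forces $L/A$ to be cyclic of prime order. As $\mathfrak{F}$ contains all such groups, $L/A_L\in\mathfrak{F}$, which contradicts the assumed $\mathfrak{F}$-abnormality of $A$ in $G$ via the chain $A<\!\cdot\,L\le G$.

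For part~$(2)$, the $\mathfrak{F}$-abnormality of $B$ in $G$ is again a matter of unpacking the definition: any chain $B\le K<\!\cdot\,L\le G$ satisfies $A\le B\le K$, so the abnormality condition for $A$ applies unchanged and yields $L/K_L\notin\mathfrak{F}$. Once $B$ is known to be $\mathfrak{F}$-abnormal in $G$, the relation $B=N_G(B)$ is obtained by repeating the normalizer argument from part~$(1)$ verbatim with $B$ in place of $A$.

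I do not anticipate any genuine obstacle; the statement is a bookkeeping consequence of the definitions. The only point that needs a brief justification is the existence of the subgroup $L$ with $A<\!\cdot\,L\le N_G(A)$ in the normalizer step, which is immediate in the finite setting. Everything else is a direct substitution into the chain condition defining $\mathfrak{F}$-abnormality, together with the fact that prime-order quotients automatically lie in $\mathfrak{F}$.
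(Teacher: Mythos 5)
Your proof is correct; the paper does not prove this lemma but merely cites it from \cite[Lemma~1.4]{Mon2016}, and your argument (restricting chains for abnormality in $B$ and in $G$, plus the observation that a maximal subgroup that is normal yields a prime-order quotient lying in $\mathfrak{F}$) is exactly the standard one. As a minor remark, your argument never uses that $\mathfrak{F}$ is subgroup-closed, only that it contains all groups of prime order, so the hypotheses are slightly stronger than needed.
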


A subgroup $H$ of a group $G$ is called an $\mathfrak{X}$-projector of $G$
if $HN/N$ is an $\mathfrak{X}$-maximal subgroup of $G/N$ for every normal
subgroup $N$ of $G$. A Carter subgroup is a nilpotent self-normalizing
subgroup~(\cite[VI.12]{Hup}, \cite[III.4.5]{DH}).
In soluble groups, Carter subgroups are $\mathfrak {N}$-projectors,
they exist and are conjugate. An insoluble group may have no Carter subgroups,
but by E.\,P.~Vdovin theorem~\cite{Vd} Carter subgroups are conjugate whenever they exist.

\begin{lemma}[{\cite[Theorem~15.1]{Shem}}]\label{lem_proj}
Let $\mathfrak{F}$ be a formation. A subgroup $H$ of a soluble group $G$ is
an $\mathfrak{F}$-projector of $G$ if and only if $H\in \mathfrak{F}$ and
$H$ is $\mathfrak{F}$-abnormal in $G$.
\end{lemma}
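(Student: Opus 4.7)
The plan is to prove both implications as a direct chase through the definitions of $\mathfrak{F}$-projector and $\mathfrak{F}$-abnormality, using only the fact that a formation is closed under homomorphic images.

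For the forward direction, assume $H$ is an $\mathfrak{F}$-projector of $G$. Taking $N=1$ in the defining condition shows that $H$ itself is $\mathfrak{F}$-maximal in $G$, so $H\in\mathfrak{F}$. To establish $\mathfrak{F}$-abnormality, suppose for contradiction that $H\le K<\!\cdot\, L\le G$ with $L/K_L\in\mathfrak{F}$. Applying the projector property with $N=K_L$, the subgroup $HK_L/K_L$ is $\mathfrak{F}$-maximal in $G/K_L$. Since $HK_L\le L$ and $L/K_L\in\mathfrak{F}$, the maximality forces $HK_L=L$; but $K_L\le K$ and $H\le K$ give $HK_L\le K$, so $K=L$, contradicting the maximality of $K$ in $L$.

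For the converse, assume $H\in\mathfrak{F}$ and $H$ is $\mathfrak{F}$-abnormal in $G$. Fix $N\lhd G$; we must show that $HN/N$ is $\mathfrak{F}$-maximal in $G/N$. Because $\mathfrak{F}$ is a formation, $HN/N\cong H/(H\cap N)\in\mathfrak{F}$. Suppose $HN\le M\le G$ with $M/N\in\mathfrak{F}$ and $HN\ne M$. Choose a subgroup $K$ with $HN\le K<\!\cdot\, M$ (possible since $G$ is finite). Then $H\le K<\!\cdot\, M\le G$, so $\mathfrak{F}$-abnormality of $H$ yields $M/K_M\notin\mathfrak{F}$. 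On the other hand, $N\lhd G$ together with $N\le K$ gives $N\le K_M$, so $M/K_M$ is a quotient of $M/N\in\mathfrak{F}$ and hence lies in $\mathfrak{F}$—a contradiction. Therefore $M=HN$, and $HN/N$ is indeed $\mathfrak{F}$-maximal in $G/N$.

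Both arguments are essentially bookkeeping, and I do not anticipate any real obstacle. The only delicate hinge, in the converse, is noticing that the normality of $N$ forces $N\le K_M$, which is precisely what lets the $\mathfrak{F}$-membership of $M/N$ transfer down to $M/K_M$ and trigger the contradiction with $\mathfrak{F}$-abnormality. Worth noting: the proof uses neither the solubility of $G$ nor saturation of $\mathfrak{F}$; the solubility hypothesis in the statement is present because it is needed by the separate existence/conjugacy theory of $\mathfrak{F}$-projectors, not by the characterisation itself.
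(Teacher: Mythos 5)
Your converse direction is correct and essentially optimal: the key observation that $N\lhd G$ together with $N\le K$ forces $N\le K_M=\bigcap_{m\in M}K^m$, so that $M/K_M$ is a quotient of $M/N\in\mathfrak{F}$, is exactly the right hinge, and that half indeed needs neither solubility nor saturation. (The paper itself gives no proof of this lemma, citing Shemetkov, so there is nothing to compare against there.)

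The forward direction, however, has a genuine gap at the step ``applying the projector property with $N=K_L$.'' The core $K_L=\bigcap_{l\in L}K^l$ is normal in $L$ but in general \emph{not} in $G$, whereas the definition of an $\mathfrak{F}$-projector quantifies only over normal subgroups of $G$; the quotient $G/K_L$ you form need not exist. This is not repairable by bookkeeping, because with the paper's definition of projector the implication is actually false for a general formation, even in soluble groups: take $\mathfrak{F}=\mathfrak{A}$, $G=S_4$ and $H=\langle(1234)\rangle$. Writing $V$ for the normal Klein four-subgroup, one checks that $H$ is $\mathfrak{A}$-maximal in $S_4$ (its only proper overgroup is a dihedral Sylow $2$-subgroup $L$), that $HV/V$ is $\mathfrak{A}$-maximal in $S_4/V\simeq S_3$, and that $HA_4=S_4$; so $H$ is an $\mathfrak{A}$-projector of $S_4$. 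Yet $H<\!\cdot\,L$ with $H\lhd L$ and $L/H_L=L/H\simeq Z_2\in\mathfrak{A}$, so $H$ is not $\mathfrak{A}$-abnormal. What your argument silently assumes is that $H$ remains an $\mathfrak{F}$-projector of every intermediate subgroup $L$ with $H\le L\le G$, i.e.\ that $H$ is an $\mathfrak{F}$-\emph{covering} subgroup; for soluble $G$ and \emph{saturated} $\mathfrak{F}$ (more generally, Schunck classes) this is a genuine theorem, and it is precisely where solubility and saturation enter --- contrary to your closing remark that neither is needed. Since the paper only ever invokes the lemma for subgroup-closed saturated formations, the statement is safe in context, but your proof of the forward implication needs the projector/covering-subgroup identification as an explicit input.
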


If $G\notin\mathfrak{F}$, but every proper subgroup of $G$ belongs to $\mathfrak{F}$,
then $G$ is a minimal non-$\mathfrak{F}$-group. A minimal non-$\mathfrak{N}$-group is
also called a Schmidt group, and its properties is well known~\cite{Mon_Sch}.

\begin{lemma}[{\cite[Lemma~3]{Sem96}}]\label{lem_minF}
Let $\mathfrak{F}$ be a subgroup-closed saturated formation.
A soluble minimal non-$\mathfrak{F}$-group $G$ is a group of one of the following types:

$(1)$~$G$ is a group of order $p$ for a prime $p\notin\pi(\mathfrak{F})$;

$(2)$~$G$ is a Schmidt group.
\end{lemma}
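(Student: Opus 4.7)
The plan is to proceed by strong induction on $|G|$. For the base case $|G|=p$ prime, subgroup-closure of $\mathfrak{F}$ forces $p\notin\pi(\mathfrak{F})$: otherwise some group in $\mathfrak{F}$ would contain an element of order $p$, the cyclic subgroup it generates would lie in $\mathfrak{F}$, and being isomorphic to $G$ this would contradict $G\notin\mathfrak{F}$. This places $G$ in case~(1).

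For the inductive step with $|G|$ composite, I would pick a minimal normal subgroup $N$, necessarily elementary abelian of some exponent $p$ since $G$ is soluble. Every proper subgroup of $G/N$ lifts via the correspondence theorem to a proper subgroup of $G$ containing $N$ and hence lies in $\mathfrak{F}$ by subgroup- and quotient-closure, so $G/N$ is either in $\mathfrak{F}$ or a minimal non-$\mathfrak{F}$-group of smaller order. In the latter situation, induction gives that $G/N$ is of prime order $q\notin\pi(\mathfrak{F})$ or a Schmidt group; the prime-order alternative is immediately killed by Sylow, since $|G|>q$ produces a proper subgroup of $G$ of order $q$ lying in $\mathfrak{F}$, forcing $q\in\pi(\mathfrak{F})$. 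An analogous subdirect-product argument shows $N$ is the unique minimal normal subgroup of $G$: two distinct minimal normal subgroups $N_1,N_2$ would embed $G$ into $G/N_1\times G/N_2$, both quotients in $\mathfrak{F}$ by the analysis just given, and closure of the formation $\mathfrak{F}$ under subdirect products would yield $G\in\mathfrak{F}$.

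In the remaining main case $G/N\in\mathfrak{F}$, saturation forces $N\not\le\Phi(G)$, giving a maximal complement $M<G$ with $M\cong G/N\in\mathfrak{F}$; uniqueness of $N$ together with solubility then yield $F(G)=N=C_G(N)$, and $M$ acts faithfully on the $p$-group $N$. To conclude that $G$ is a Schmidt group, I would show every proper subgroup of $G$ is nilpotent: a hypothetical non-nilpotent proper subgroup $H$ of smallest order would itself be Schmidt and would still lie in $\mathfrak{F}$. The main obstacle is precisely this last step — deriving a contradiction from the coexistence of such $H$ with the factorization $G=NM$ without any a priori inclusion $\mathfrak{N}\subseteq\mathfrak{F}$. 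I would attack it using the lemma on $\mathfrak{F}$-projectors (applicable since $\mathfrak{F}$ is saturated and $G$ is soluble): choose $M$ as an $\mathfrak{F}$-projector, so that its $\mathfrak{F}$-abnormal self-normalizing character, combined with the Hall-subgroup structure in the soluble group $G$ and the self-centralizing property of $N$, rules out the existence of $H$. Once every proper subgroup of $G$ is nilpotent, $G$ is minimal non-nilpotent by definition, i.e.\ a Schmidt group, placing it in case~(2).
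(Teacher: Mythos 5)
The paper does not actually prove this lemma --- it is quoted from \cite[Lemma~3]{Sem96} --- so there is no internal argument to compare yours against; your attempt has to stand on its own, and it does not. You candidly flag the decisive step (showing every proper subgroup of $G$ is nilpotent in the case $G/N\in\mathfrak{F}$) as an unresolved obstacle, and that gap is not closable from the hypotheses as stated, because the statement as transcribed is false for a general subgroup-closed saturated formation. Take $\mathfrak{F}=\mathfrak{U}$ and $G=E_{7^2}\rtimes S_3$ with $S_3$ acting faithfully and irreducibly on $E_{7^2}$ (the element of order $3$ acts diagonally with eigenvalues the primitive cube roots of unity in $GF(7)$, the involution swaps the two eigenlines). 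Every maximal subgroup of $G$ is either $E_{7^2}\rtimes Z_3$, $E_{7^2}\rtimes Z_2$ (both supersoluble, since the relevant cyclic group stabilizes a line), or a complement isomorphic to $S_3$; hence $G$ is a soluble minimal non-$\mathfrak{U}$-group, yet $|G|=294$ is not prime and $G$ is not a Schmidt group (it has the non-nilpotent proper subgroup $S_3$, and three prime divisors). The conclusion of the lemma is exactly the Shemetkov property restricted to soluble groups, which not every subgroup-closed saturated formation has; the missing hypothesis is the superradicality (equivalently, here, the Shemetkov property) that the paper's main theorem does assume and that the original source works under. In the example above, $E_{7^2}\rtimes Z_3$ and $E_{7^2}\rtimes Z_2$ are $\mathfrak{U}$-subnormal $\mathfrak{U}$-subgroups whose product is $G$, so superradicality is precisely what excludes it --- and it is precisely at your stuck step that this hypothesis must be invoked.

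Two further points. First, even granting the main case, your induction silently drops the branch in which $G/N$ is a Schmidt group (you eliminate only the prime-order alternative for $G/N\notin\mathfrak{F}$ and then pass directly to ``the remaining main case $G/N\in\mathfrak{F}$''); that branch needs an argument. Second, the identity $F(G)=N=C_G(N)$ requires $N\not\le\Phi(G)$ \emph{together with} uniqueness of the minimal normal subgroup and the standard fact for soluble groups; as written you assert it without noting that $\Phi(G)$ could a priori be nontrivial. Neither of these is fatal in the way the main gap is, but both would need to be filled in a complete proof.
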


\begin{lemma}\label{lem_inF}
Let $\mathfrak{F}$ be a subgroup-closed saturated formation
containing all nilpotent groups. A soluble group $G$ belongs $\mathfrak{F}$
if and only if every primary cyclic subgroup of $G$ is $\mathfrak{F}$-subnormal.
\end{lemma}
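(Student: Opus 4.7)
The forward direction is immediate from Lemma~\ref{EF_lem1_1}(4): when $G\in\mathfrak F$, $G^{\mathfrak F}=1$ is contained in every subgroup, so every subgroup, and in particular every primary cyclic one, is $\mathfrak F$-subnormal.

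For the converse, the plan is to argue by induction on $|G|$. Let $G$ be a soluble group of least order satisfying the hypothesis yet with $G\notin\mathfrak F$. First, the hypothesis is inherited by quotients: any primary cyclic $\langle\bar x\rangle\le G/N$ is the image of a primary cyclic subgroup of $G$ (take the $p$-part of a preimage of $\bar x$), and Lemma~\ref{EF_lem1_1}(3) then transfers $\mathfrak F$-subnormality. It is also inherited by subgroups via the standard persistence of $\mathfrak F$-subnormality to intermediate overgroups for subgroup-closed formations (a companion of Lemma~\ref{EF_lem1_1}(1)). Hence in the minimal counterexample every proper subgroup lies in $\mathfrak F$ by induction, so $G$ is minimal non-$\mathfrak F$.

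Lemma~\ref{lem_minF} leaves two cases; the one with $|G|=p$ and $p\notin\pi(\mathfrak F)$ is excluded because $\mathfrak N\subseteq\mathfrak F$ forces $\pi(\mathfrak F)=\mathbb P$, so $G=P\rtimes\langle y\rangle$ is a Schmidt group with normal Sylow $p$-subgroup $P$ and cyclic Sylow $q$-subgroup $\langle y\rangle$. If $\Phi(G)\neq1$, then $G/\Phi(G)\in\mathfrak F$ by minimality and saturation of $\mathfrak F$ pulls this back to $G\in\mathfrak F$, a contradiction. Thus $\Phi(G)=1$, which in a Schmidt group forces $P$ to be elementary abelian and to coincide with the (unique) minimal normal subgroup $F(G)=G^{\mathfrak N}$ of $G$.

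The endgame works at the level of the $\mathfrak F$-residual. From $\mathfrak N\subseteq\mathfrak F$ we get $G^{\mathfrak F}\le G^{\mathfrak N}=P$; the assumption $G\notin\mathfrak F$ makes $G^{\mathfrak F}\ne 1$; minimality of $P$ as a normal subgroup of $G$ then forces $G^{\mathfrak F}=P$. Any chain $\langle y\rangle=H_0<\!\cdot\, H_1<\!\cdot\,\dots<\!\cdot\, H_n=G$ witnessing $\mathfrak F$-subnormality of $\langle y\rangle$ must satisfy $P=G^{\mathfrak F}\le H_{n-1}$ at the top link; combined with $\langle y\rangle\le H_{n-1}$, this gives $H_{n-1}\ge P\langle y\rangle=G$, contradicting $H_{n-1}<G$. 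The only genuinely delicate point is the subgroup inheritance of the hypothesis; once that is accepted, the remainder is a direct consequence of the Schmidt structure, saturation, and the inclusion $\mathfrak N\subseteq\mathfrak F$.
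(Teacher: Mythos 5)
Your proof is correct and follows essentially the same route as the paper: a minimal counterexample is a soluble minimal non-$\mathfrak F$-group, hence a Schmidt group $P\rtimes\langle y\rangle$ by Lemma~\ref{lem_minF}; saturation disposes of the case $G^{\mathfrak F}\le\Phi(G)$, and then $G^{\mathfrak F}=P$ makes $\mathfrak F$-subnormality of $\langle y\rangle$ force a proper subgroup to contain $P\langle y\rangle=G$. The only cosmetic difference is that you derive $\Phi(G)=1$ and the minimal normality of $P$ directly, where the paper cites the dichotomy from the Schmidt-group literature; both arguments also rely on the same standard persistence of $\mathfrak F$-subnormality to intermediate subgroups, which you at least flag explicitly.
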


\begin{proof}
Assume that $G\in\mathfrak{F}$. Then every proper, and thus every primary cyclic
subgroup of $G$, is $\mathfrak{F}$-subnormal.

Conversely, suppose that there are groups not in  $\mathfrak{F}$,
in which every primary cyclic subgroup is $\mathfrak{F}$-subnormal.
Choose a group $G$ of minimal order among these groups.
Then every proper subgroup of $G$ belongs to $\mathfrak{F}$.
In view of Lemma~\ref{lem_minF}, $G$ is a Schmidt group, and
$G=P\rtimes \langle y\rangle$~\cite[Theorem~1.1]{Mon_Sch}.
By~\cite[Theorem~1.5\,(5.2)]{Mon_Sch}, either $G^\mathfrak{F}\leq\Phi (G)$
or $P\leq G^\mathfrak{F}$. If $G^\mathfrak{F}\leq\Phi (G)$, then $G\in\mathfrak{F}$
since $\mathfrak{F}$ is a saturated formation, a contradiction.
Let $P\leq G^\mathfrak{F}$. By the choice of $G$, $\langle y\rangle$ is
$\mathfrak{F}$-subnormal in $G$, and so in $G$, there is a maximal subgroup $M$
containing $\langle y\rangle$ and $G^{\mathfrak{F}}$, a contradiction.
\end{proof}

\section{The Theorem Proof}

\begin{proof}
Assume that every primary cyclic subgroup of a soluble group $G\notin\mathfrak{F}$ is
self-normalizing or $\mathfrak{F}$-subnormal. Then according to Lemma~\ref{lem_inF},
there is a cyclic $p$-subgroup $\langle x\rangle$  for some $p\in \pi (G)$, which
is not $\mathfrak{F}$-subnormal in $G$. By the choice of  $G$, $\langle x\rangle$ is
self-normalizing, and so $\langle x\rangle$ is a Sylow subgroup and a Carter subgroup
of $G$. Since a Carter subgroup is an $\mathfrak{N}$-projector~\cite[5.27]{Mon},
we get $G=G^{\mathfrak{N}}\langle x\rangle$. In view of~\cite[IV.2.6]{Hup}, in $G$
there is a normal Hall $p^{\prime}$-subgroup $G_{p^{\prime}}$ and
$G=G^{\mathfrak{N}}\langle x\rangle=G_{p^{\prime}}\rtimes\langle x\rangle$.
Hence $G_{p^{\prime}}\leq G^{\mathfrak{N}}$, but
$G/G_{p'}\simeq \langle x\rangle\in\mathfrak{A}\subseteq\mathfrak{N}$ and
$G^\mathfrak{N}\leq G'\leq G_{p'}$.
Thus, $G_{p^{\prime}}=G^{\mathfrak{N}}=G'$ and $G=G'\rtimes \langle x\rangle$.
As Carter subgroups of soluble groups are conjugate~\cite[5.28]{Mon},
we conclude that $G^{\prime}\rtimes \langle x^p\rangle$ has no self-normalizing
primary cyclic subgroup. Therefore $G^{\prime}\rtimes \langle x^p\rangle\in\mathfrak{F}$
by Lemma~\ref{lem_inF}. Thus $(3)$ follows from $(1)$.

Now we prove that $(3)$ implies $(2)$. Assume that a soluble group $G\notin\mathfrak{F}$
is represented in the form $G=G^{\prime}\rtimes \langle x\rangle$, where
$\langle x\rangle$ is a Sylow $p$-subgroup for some $p\in\pi(G)$ and
a Carter subgroup, $G^{\prime}\rtimes \langle x^p\rangle\in \mathfrak{F}$.
Choose a subgroup $H$ of $G$. If $|\langle x\rangle|$ divides  $|H|$,
then $\langle x\rangle ^g\leq H$ for some $g\in G$ and $H$ is self-normalizing.
Suppose that $|\langle x\rangle|$ does not divide $|H|$. Then $A=G^{\prime}H$
is a proper subgroup of $G$, and $A\in\mathfrak{F}$ by the choice of $G$.
We conclude from $\mathfrak{A}\subseteq\mathfrak{N}\subseteq\mathfrak{F}$ that
$G^\mathfrak{F}\leq G'\leq A$, and $A$ is $\mathfrak{F}$-subnormal in $G$
by Lemma~\ref{EF_lem1_1}\,$(4)$. Hence $H$ is $\mathfrak{F}$-subnormal in $G$
in view of Lemma~\ref{EF_lem1_1}\,$(5)$. Thus, $(2)$ follows from $(3)$.

Finally, assume that every proper subgroup of $G$ is self-normalizing or
$\mathfrak{F}$-subnormal. Obviously, every primary cyclic subgroup of $G$
is also self-normalizing or $\mathfrak{F}$-subnormal.
Thus $(2)$ implies $(1)$.
\end{proof}

Note that in view of Lemma~\ref{lem_Fabn}\,(1), if $\mathfrak{F}$ is
a subgroup-closed formation containing all nilpotent subgroups,
then every $\mathfrak{F}$-abnormal subgroup is
self-normalizing. Hence the proved theorem 
extends results of~\cite{EB,SS_PFMT,Sem11,Fatt}. In particular,

\begin{corollary}
If $\mathfrak{F}$ is a subgroup-closed saturated superradical formation
containing all nilpotent groups, then for a soluble group $G\notin\mathfrak{F}$,
the following statements are equivalent.

$(1)$~Every primary cyclic subgroup of $G$ is
$\mathfrak{F}$-subnormal or $\mathfrak{F}$-abnormal.

$(2)$~Every proper  subgroup of $G$ is
$\mathfrak{F}$-subnormal or $\mathfrak{F}$-abnormal.

$(3)$~$G=G^{\prime}\rtimes \langle x\rangle$, where
$\langle x\rangle$ is a Sylow $p$-subgroup for some $p\in\pi(G)$
and an $\mathfrak{F}$-projector of $G$,
$G^{\prime}=G^{\mathfrak{F}}$ and $G^{\prime}\rtimes \langle x^p\rangle\in \mathfrak{F}$.
\end{corollary}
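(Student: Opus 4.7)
The plan is to reduce everything to Theorem~\ref{th_main} and strengthen its conclusion at the two places where the corollary asks for more. The starting observation is Lemma~\ref{lem_Fabn}(1): since $\mathfrak{F}$ is subgroup-closed and contains all nilpotent groups, every $\mathfrak{F}$-abnormal subgroup is self-normalizing. Hence statement~(1) (respectively~(2)) of the corollary implies statement~(1) (respectively~(2)) of Theorem~\ref{th_main}, and (2) $\Rightarrow$ (1) is immediate because every primary cyclic subgroup is a proper subgroup. It therefore suffices to prove (1) $\Rightarrow$ (3) and (3) $\Rightarrow$ (2).

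For (1) $\Rightarrow$ (3), I would first invoke Theorem~\ref{th_main} to obtain the decomposition $G = G' \rtimes \langle x\rangle$, with $\langle x\rangle$ a Sylow $p$-subgroup and Carter subgroup of $G$ and $G' \rtimes \langle x^p\rangle \in \mathfrak{F}$. Because $G \notin \mathfrak{F}$, Lemma~\ref{lem_inF} supplies a primary cyclic subgroup that fails to be $\mathfrak{F}$-subnormal; by hypothesis~(1) of the corollary this subgroup must be $\mathfrak{F}$-abnormal, hence self-normalizing by Lemma~\ref{lem_Fabn}(1), so we may take $\langle x\rangle$ to be this $\mathfrak{F}$-abnormal cyclic Sylow subgroup in the theorem's output. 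Being cyclic, $\langle x\rangle$ lies in $\mathfrak{A}\subseteq\mathfrak{N}\subseteq\mathfrak{F}$, and Lemma~\ref{lem_proj} promotes it to an $\mathfrak{F}$-projector of $G$. To identify $G' = G^{\mathfrak{F}}$, note that $G^{\mathfrak{F}} \leq G'$ holds because $G/G' \in \mathfrak{A} \subseteq \mathfrak{F}$; for the reverse inclusion, the image of the $\mathfrak{F}$-projector $\langle x\rangle$ in $G/G^{\mathfrak{F}}$ is again an $\mathfrak{F}$-projector, and since $G/G^{\mathfrak{F}} \in \mathfrak{F}$ this image is all of $G/G^{\mathfrak{F}}$. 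Thus $G = \langle x\rangle G^{\mathfrak{F}}$. As $\langle x\rangle$ is a Sylow $p$-subgroup and $G^{\mathfrak{F}} \leq G'$ is contained in the normal Hall $p'$-subgroup $G'$, the intersection $\langle x\rangle \cap G^{\mathfrak{F}}$ is trivial, and an order count forces $G^{\mathfrak{F}} = G'$.

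For (3) $\Rightarrow$ (2), take any proper subgroup $H$ of $G$. If $|\langle x\rangle|$ divides $|H|$, then by Sylow some conjugate $\langle x\rangle^g$ lies in $H$; this conjugate is again an $\mathfrak{F}$-projector, hence $\mathfrak{F}$-abnormal in $G$ by Lemma~\ref{lem_proj}, and Lemma~\ref{lem_Fabn}(2) then forces $H$ itself to be $\mathfrak{F}$-abnormal in $G$. If $|\langle x\rangle|$ does not divide $|H|$, then a Sylow $p$-subgroup of $H$ is a proper subgroup of some Sylow $\langle x\rangle^g$ of $G$ and, because $\langle x\rangle^g$ is cyclic, sits inside its unique maximal subgroup $\langle x^p\rangle^g$. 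Consequently $A = G'H \leq (G' \rtimes \langle x^p\rangle)^g \in \mathfrak{F}$, and since $A \supseteq G' = G^{\mathfrak{F}}$, Lemma~\ref{EF_lem1_1}\,(4),(5) deliver $\mathfrak{F}$-subnormality of $H$ in $G$.

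The main obstacle is the identification $G^{\mathfrak{F}} = G'$, which requires passing the $\mathfrak{F}$-projector down to $G/G^{\mathfrak{F}}$ and then using that $\langle x\rangle$ is a Sylow $p$-complement of $G'$; the remaining assertions are short appeals to the lemmas already cited in the paper.
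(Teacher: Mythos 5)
Your proposal is correct and follows essentially the same route as the paper: reduce to the theorem via Lemma~\ref{lem_Fabn}(1), use Lemma~\ref{lem_inF} and Lemma~\ref{lem_proj} to identify $\langle x\rangle$ as an $\mathfrak{F}$-abnormal $\mathfrak{F}$-projector and deduce $G=G^{\mathfrak{F}}\langle x\rangle$, hence $G'=G^{\mathfrak{F}}$, and for $(3)\Rightarrow(2)$ split proper subgroups according to whether they contain a full Sylow $p$-subgroup, using Lemma~\ref{lem_Fabn}(2) in one case and Lemma~\ref{EF_lem1_1}(4),(5) in the other. The only differences are presentational (you obtain the $\mathfrak{F}$-abnormality of $\langle x\rangle$ directly rather than by contradiction, and argue $(3)\Rightarrow(2)$ as a self-contained dichotomy instead of first citing the theorem's implication $(3)\Rightarrow(2)$), and both variants are sound.
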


\begin{proof}
Firstly, we prove that $(3)$ follows from $(1)$. Assume that every primary cyclic subgroup
of $G$ is $\mathfrak{F}$-subnormal or $\mathfrak{F}$-abnormal. Then
it follows from Lemma~\ref{lem_Fabn}\,(1) that every primary cyclic subgroup of $G$ is
$\mathfrak{F}$-subnormal or self-normalizing, and we can use the proved theorem. 
So, $G=G^{\prime}\rtimes \langle x\rangle$, where $\langle x\rangle$ is a  Sylow
$p$-subgroup for some $p\in\pi(G)$ and a Carter subgroup,
$G^{\prime}\rtimes \langle x^p\rangle\in \mathfrak{F}$.
To prove that $\langle x\rangle$ is $\mathfrak{F}$-abnormal in $G$,
we suppose that is not true. Then $\langle x\rangle$ is
$\mathfrak{F}$-subnormal in $G$ by the choice of $G$.
Hence every primary cyclic subgroup of $G$ is $\mathfrak{F}$-subnormal and $G\in\mathfrak{F}$ by Lemma~\ref{lem_inF}, a contradiction.
Thus $\langle x\rangle$ is $\mathfrak{F}$-abnormal in $G$
and an $\mathfrak{F}$-projector of $G$ in view of Lemma~\ref{lem_proj}.
Therefore $G=G^\mathfrak{F}\langle x\rangle$ and $G'=G^\mathfrak{F}$.

Now assume that $(3)$ is true. According to Lemma~\ref{lem_proj}, we deduce that
$\langle x\rangle$ is $\mathfrak{F}$-abnormal in $G$.
By the proved theorem, 
every subgroup of $G$ is self-normalizing or $\mathfrak{F}$-subnormal.
Let $H$ be a self-normalizing and non-$\mathfrak{F}$-subnormal subgroup of $G$.
If $A=G'H$ is a proper subgroup of $G$, then $A\in\mathfrak{F}$ and $H$ is $\mathfrak{F}$-subnormal in $G$ by Lemma~\ref{EF_lem1_1}\,(5), a contradiction.
Hence $G=G^{\prime}\rtimes \langle x\rangle=G'H$ and $\langle x\rangle\leq H$.
In view of Lemma~\ref{lem_Fabn}\,(2),  we obtain $H$ is $\mathfrak{F}$-abnormal in $G$.
Thus $(3)$ implies $(2)$.

Finally, assume that every proper subgroup of $G$ is $\mathfrak{F}$-subnormal
or $\mathfrak{F}$-abnormal. Then every primary cyclic subgroup of $G$
is also $\mathfrak{F}$-subnormal or $\mathfrak{F}$-abnormal.
Thus $(1)$ follows from $(2)$.

\end{proof}

\textbf{Sokhor Irina Leonidovna}

Brest State University named after A.\,S. Pushkin,

Kosmonavtov Boulevard, 21, 224016 Brest, Republic of Belarus

+375\,162\,21\,71\,14

\texttt{irina.sokhor@gmail.com}

\end{document}